\numberwithin{equation}{section}
\numberwithin{subsection}{section}
\newenvironment{enumeratea}
{\begin{enumerate}[\upshape(a)]
 \setlength{\itemsep}{0pt}
  \setlength{\parskip}{0pt}
  \setlength{\parsep}{0pt}
  }
{\end{enumerate}}
\newenvironment{enumerate1}
{\begin{enumerate}[\upshape (1)]}
{\end{enumerate}}
\newtheorem*{namedtheorem}{\theoremname}
\newcommand{\theoremname}{testing}
\newtheorem{theorem}{Theorem}
\newtheorem{proposition}{Proposition}
\newtheorem{proposition-definition}[subsection]
{Proposition-Definition}
\newtheorem{lemma}{Lemma}
\theoremstyle{definition}
\newtheorem{remark}{Remark}
\newtheorem*{definition*}{Definition}
\theoremstyle{remark}
\newcommand\cC{\mathcal{C}}
\newcommand\cD{\mathcal{D}}
\newcommand\cE{\mathcal{E}}
\newcommand\cH{\mathcal{H}}
\newcommand\cK{\mathcal{K}}
\newcommand\cM{\mathcal{M}}
\newcommand\cO{\mathcal{O}}
\newcommand\cP{\mathcal{P}}
\newcommand\cU{\mathcal{U}}
\newcommand\cY{\mathcal{Y}}
\newcommand\CC{\mathbb{C}}
\newcommand\HH{\mathbb{H}}
\newcommand\NN{\mathbb{N}}
\newcommand\PP{\mathbb{P}}
\newcommand\QQ{\mathbb{Q}}
\newcommand\ZZ{\mathbb{Z}}
\newcommand\fX{\mathfrak{X}}
\newcommand\arr{\ifinner\to\else\longrightarrow\fi}
\def\displaytimes_#1{\mathrel{\mathop{\times}\limits_{#1}}}
\def\displayotimes_#1{\mathrel{\mathop{\bigotimes}\limits_{#1}}}
\newcommand\doublelong[2]{\mathbin{\xymatrix{{}\ar@<3pt>[r]^{#1}
\ar@<-3pt>[r]_{#2}&}}}
\newlength{\ignora}
\newcommand{\MT}{\left(\begin{array}{cc}1 & 1 \\0 & 1\end{array}\right)}
\newcommand{\MS}{\left(\begin{array}{cc}0 & -1 \\1 & 0\end{array}\right)}
\newcommand{\Mp}{{Mp_2}}
\newcommand{\Pic}{\rm{Pic}}
\newcommand{\Mod}{{\rm Mod}}
\theoremstyle{plain}
\theoremstyle{definition}
\begin{document}

\title{Modular forms and special cubic fourfolds}

\author{Zhiyuan Li,~~ Letao Zhang}

\address{Department of Mathematics\\
Stanford University\\
Building 380\\
Stanford, CA 94305\\
U.S.A.}
\address{Department of Mathematics\\
Rice University\\
6100 Main Street\\
Houston, TX 77005\\
U.S.A.}

\email{zli2@stanford.edu, letao.zhang@rice.edu}

\begin{abstract}
 We study the degrees of special cubic divisors on moduli space of cubic fourfolds with at worst ADE singularities.
 In this paper, we show that the generating series of the degrees of such divisors is a level three modular
 form.

\end{abstract}

\maketitle

%
%
%
%

\begin{section}{Introduction}

The classical Noether-Lefschetz locus for degree $d$ hypersurfaces
in $\PP^3$ is the locus in the Hilbert scheme space
$\PP^{{{d+3}\choose{3}}-1}$  where the Picard rank is greater than
one. For $d\geq4$, the Noether-Lefschetz loci are known to be a
countable union of proper subvarieties of
$\PP^{{{d+3}\choose{3}}-1}$ by Griffiths and Harris \cite{GH85}. A
natural question is to find the degrees of these subvarieties. For
quartic surfaces in $\PP^3$, Maulik and Pandharipande \cite{MP07}
showed that the Noether-Lefschetz loci are divisors and the degrees of these divisors are the Fourier
coefficients of certain modular forms.

In higher dimensional cases, cubic fourfolds have received a lot of
attention since their period map behaves quite nicely. Specifically,
the period domain for cubic fourfolds is a bounded symmetric domain
of type IV and the global Torelli theorem holds
(cf.~\cite{Vo86,Vo09}).

The analogues of Noether-Lefschetz loci for surfaces are the loci of
special cubic fourfolds studied by Hassett \cite{Ha00}. A smooth cubic fourfold $X$ in $\PP^5$ is {\it special}
of discriminant $d>6$ if it contains an algebraic surface $S$, and
the discriminant of the saturated lattice spanned by $h^2$ and $[S]$
in $H^4(X,\ZZ)$ is $d$, where $h=c_1(\cO_X(1))$.
 The Zariski closure of the collection of such cubic fourfolds forms an irreducible divisor $\cC_d$ in the moduli space $\cM$
 (cf.~\cite{La09}) of cubic fourfolds with at worst isolated ADE singularities and it is nonempty if and only if $d\equiv0,2\mod 6$.
 We interpret $\cC_6$ as the set of singular cubics in $\cM$.
The special cubic divisor $C_d$ in the Hilbert scheme $\PP^{55}$ of cubic hypersurfaces is the lift of $\cC_d$ (see $\S$2 for more details).
In the present paper, we study the degree of special cubic divisors $C_d$. Our main result is:
\begin{theorem}\label{thm:1}
Let $\Theta(q)=-2+\sum\limits_{d>2}^{\infty} \deg(C_d)q^{\frac{d}{6}}$ be the generating series for the degrees of the
special cubic divisors. Then $\Theta(q)$ is a modular form of weight $11$ and level $3$ with expansion:
\begin{align*}
\Theta(q)= &-\alpha^{11}(q)+162\alpha^8(q)\beta(q)+91854\alpha^5(q)\beta^2(q)+ 2204496\alpha^2(q)\beta^3(q)\\
      &-\alpha^{11}(q^{\frac{1}{3}})+66\alpha^8(q^{\frac{1}{3}})\beta(q^{\frac{1}{3}})-1386\alpha^5(q^{\frac{1}{3}})\beta^2(q^{\frac{1}{3}})+9072\alpha^2(q^{\frac{1}{3}})\beta^3(q^{\frac{1}{3}})\\
          =&-2+192 q+3402 q^{\frac{4}{3}}+196272q^2+915678q^{\frac{7}{3}}+\ldots
\end{align*}

where
\begin{equation}\label{eq1.2}
\alpha(q)=1+6\sum\limits_{n\geq1} q^n\sum\limits_{d|n}\left(\frac{d}{3}\right) ~\textrm{and}~~ \beta(q)=\sum\limits_{n\geq1} q^n\sum\limits_{d|n}(n/d)^2\left(\frac{d}{3}\right)\end{equation}
are level three modular forms of weight 1 and 3 that generate the space of modular forms with respect to the group $\Gamma_0(3)$ (see~$\S$3.2).
Here, $\left(\frac{d}{3}\right) $ denotes the Legendre symbol.
\end{theorem}
The approach to Theorem 1 is via the result of Borcherds \cite{Bo99} and Kudla-Milson \cite{KM90}. The degrees of $C_d$ are the Fourier coefficients of a vector-valued modular form. As in \cite{MP07}, the Noether-Lefschetz numbers are related to the reduced Gromov-Witten (GW) invariants of K3 surfaces.  We hope there is a similar GW-theory interpretation of $\deg(C_d)$.

\subsection*{Outline of paper} In section 2, we review some classical result on cubic fourfolds and describe the special cubic divisors
from an arithmetic perspective. Section 3 is the central section of this paper. We recap Borcherds' work on Heegner divisors to prove the modularity of a
 vector-valued generating series of $\deg(C_d)$. This vector-valued modular form can be expressed explicitly in terms of some well-known modular forms. The proof of our main theorem is presented in the last section.
\\

 After posting our paper on the arXiv server, we learned from Atanas Iliev that, in forthcoming work,  Atanas Iliev, Emanuel Scheidegger,
 and Ludmil Katzarkov in \cite{AEL} have independently proved Theorem \ref{thm1} using a different basis of the space of vector-valued modular forms.\\

 {\bf Acknowledgements}.  The first author was supported by NSF grant 0901645. The authors are grateful to their advisor Brendan Hassett for introducing
  this problem, and many useful discussions. We have benefited from discussions with Radu Laza.
  We would also like to thank the referee for many helpful comments.
\end{section}
%
%
%
%
\begin{section}{Special Cubic fourfolds and Heegner divisors}\label{sec:2}

In this section, we review some results on special cubic divisors and the relation with Heegner divisors associated to a signature $(m,2)$ lattice, which is defined from an arithmetic perspective. Throughout the paper,
 we denote by $L^\vee$ the dual of a lattice $L$ and $O(L)$ its associated orthogonal group.
\begin{subsection}{Period domain}\label{sec:2.1}
Let $X$ be a smooth cubic fourfold in $\PP^5$. Denote by $\Lambda$ the middle cohomology group $H^4(X,\ZZ)$ containing $h^2$.
It is well known (e.g.~\cite{Ha00},\cite{Vo09}) that the primitive middle
cohomology $H^4(X,\ZZ)_{prim}$ is isometric to the lattice
 \begin{equation}\label{eq2.1}
 \Lambda_0:= A_2\oplus U^{\oplus2} \oplus E_8^{\oplus 2}
 \end{equation}
under the intersection form $\left<,\right>$, with an associated period domain $\cD$ that is a connected component of
$$\cD^{\pm}:=\{\omega\in \PP(\Lambda_0 \otimes_\ZZ \CC)| \left<\omega,\omega \right>=0,\left<\omega, \bar{\omega}\right> < 0\}.$$
Here, $A_2$ and $ E_8$  are root lattices corresponding to the root
systems of the same names,  and $U$ is the hyperbolic lattice of rank
two. The monodromy group $\Gamma\subset O^{+}(\Lambda_0)$ (i.e. the
identity connected component of $O(\Lambda_0)$) is defined by
$$\Gamma=\{g\in O^{+}(\Lambda_0)|~g~\hbox{acts trivially on
$\Lambda_0^\vee/\Lambda_0$}\}$$ acts on $\cD$ and the arithmetic
quotient $\Gamma\backslash\cD$ is a quasi-projective variety
parametrizing the periods of cubics.

Hassett \cite{Ha00} has defined irreducible divisors $D_d\subseteq
\Gamma\backslash\cD$ as follows:
 \begin{definition*}Let $L$ be a rank-two positive definite saturated sublattice of $\Lambda$ containing $h^2$ of
  discriminant $d$. There is an associated hyperplane
\begin{align*}\cH_L:=\{\omega\in \cD~|~\omega\perp L\}\end{align*} in $\cD$.  Then $D_d$ is defined as the quotient by $\Gamma$ of the union of all such hyperplanes $\cH_L$.  \end{definition*}

On the other hand,  as constructed by Laza \cite{La09} via geometric invariant theory (GIT),  the moduli space $\cM$ of cubic fourfolds with at worst isolated ADE singularities
 is a Zariski open subset of $\PP(W)^s/\!\!/SL_6(\CC)$, where $W=H^0(\PP^5,\cO_{\PP^5}(3))$ and $\PP(W)^s$ denotes the GIT stable points of $\PP(W)\cong \PP^{55}$ under
 the action of $SL_6(\CC)$. Together with Voisin's Global Torelli theorem (cf.~\cite{Vo86,Vo09}), Laza \cite{La10} and also Looijenga \cite{Lo09} have shown that there is an extended  period map
\begin{equation}\label{eq2.2}
\cP:\cM\rightarrow \Gamma\backslash\cD,
\end{equation} which is an open immersion. The complement of  the image of $\cP$ in $\Gamma\backslash\cD$ is
 $D_2$ corresponding to degenerations of determinant cubic hypersurfaces (cf.~\cite{Ha00} $\S$4.4).
 Moreover, $\cC_d\subset\cM$ is exactly the pullback of $D_d$ via $\cP$.

Let $\varphi:\PP(W)\dashrightarrow \cM$ be the natural quotient map;
then the special cubic divisor $C_d\subset \PP(W)$ is the Zariski
closure of the pullback of $\cC_d$ via $\varphi$. \end{subsection}

\subsection{Heegner divisors}
In general, let $M$ be an even lattice of signature $(m,2)$ with an associated domain $\cD_M$ as a connected component of
 $$\cD_M^{\pm}:=\{ \omega\in \PP(M\otimes \CC)|\left<\omega,\omega\right>=0, \left<\omega,\bar{\omega}\right><0\}; $$
  there is an arithmetic group $$\Gamma_M =\{ g\in O^{+}(M)|~g~\hbox{ acts trivially on $M^\vee/M$}\}$$
acting on $\cD_M$. For $n\in\QQ^{+}$ and $\gamma\in M^\vee$,  the
{\it Heegner  divisor} \cite{Br02} $y_{n,\gamma}$  on
$\Gamma_M\backslash \cD_M$ is defined by
 \begin{equation}\label{eq2.3}y_{n,\gamma}= \Gamma_M\backslash\left(\sum_{\frac{1}{2}\left<v,v\right>=n,\textrm{ }v\equiv\gamma\textrm{ mod } M} v^{\bot}\right),
 \end{equation}
where $v^\bot=\{w \in \cD_M|~\left<w,v\right>=0\}$  is a hyperplane of $\cD_M$.
When $n=0$, we take $y_{0,0}$ to be the $\QQ$-Cartier divisor coming from  $\cO(1)$ on $\cD_M\subseteq \PP(M\otimes\CC)$.
Similarly to \cite{MP07}, it is clear from definition that $\cP^\ast[y_{0,0}]$ is actually the Hodge bundle $R^3f_\ast(\Omega^1_{\cY/\cM})$ on $\cM$,  where $f:\cY\rightarrow\cM$ is the universal family and $\Omega^1_{\cY/\cM}$ is the relative sheaf of holomorphic 1-forms.

\begin{remark}
One can see that $y_{n,\gamma}$ is the arithmetic quotient of a Hermitian symmetric subdomain of $\cD_M$. They are called {\it special cycle} on $\Gamma_M\backslash\cD_M$ in Kudla's program \cite{Ku97}.
\end{remark}
\noindent Taking $M=\Lambda_0$, we have $\cD_{\Lambda_0}=\cD$ and
$\Gamma_{\Lambda_0}=\Gamma$. Note that
$\Lambda_0^\vee/\Lambda_0\cong \ZZ/3\ZZ$, one can choose
representatives $\gamma_i\in \Lambda_0^\vee/\Lambda_0$
 with $ \frac{1}{2}\left<\gamma_i,\gamma_i\right>\equiv\frac{i^2}{3}\mod \ZZ $ for $i=0,1,2.$
\begin{lemma}\label{lem1}
 The  Heegner divisors $y_{n,\gamma}=y_{n,-\gamma}=D_{d}$ on
  $\Gamma\backslash\cD$, where
   $$n=\frac{d}{6}~~~\hbox{and} ~~~\gamma\equiv \frac{d}{2}\gamma_1 \mod \Lambda_0,~
   \textrm{for}~ (n,\gamma)\neq
   (0,0).$$
\begin{proof} The redundancy
 $y_{n,\gamma} =y_{n,-\gamma}$ is because of  the symmetry
 $\left<v\right>^{\perp}=\left<-v\right>^{\perp}$. Let $L\subseteq \Lambda$ be a rank 2 negative sublattice containing $h^2$ and of discriminant $d$.
 Assume that $L$ is generated by $h^2$ and $\zeta$. Then there is a  bijection between the two sets of hyperplanes as follows:
  \begin{align*}\{\cH_{L}\} &\longleftrightarrow\{ v^{\perp}\}\\
   \zeta&\longleftrightarrow v=\zeta+\frac{\left<\zeta,h^2\right>}{3}h^2,\end{align*}
since one can verify  that
\begin{align*} \frac{1}{2}\left<\zeta+\frac{\left<\zeta,h^2\right>}{3}h^2, \zeta+\frac{\left<\zeta,h^2\right> }{3}h^2\right>=n \\ \zeta+\frac{\left<\zeta,h^2\right>}{3} h^2\equiv  \pm \frac{d}{2}\gamma_1 \mod  \Lambda_0.\end{align*}
\end{proof}
\end{lemma}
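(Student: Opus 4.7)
The plan is to exhibit, for each rank-two negative-definite saturated sublattice $L \subseteq -\Lambda$ containing $h^2$ of discriminant $d$, an explicit vector $v \in \Lambda'^\vee_0$ (determined up to sign) with $\mathcal{D}_L = v^\perp$, and then to read off from $v$ the pair $(n,\gamma)$ labelling the corresponding Heegner divisor. The symmetry $y_{n,\gamma} = y_{n,-\gamma}$ is immediate from $v^\perp = (-v)^\perp$, so the task reduces to the main identification.

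Since every $\omega \in \mathcal{D}_{\Lambda'_0}$ automatically satisfies $\langle \omega, h^2\rangle = 0$, the condition $\omega \perp L$ is equivalent to $\omega \perp \zeta$ for any generator $\zeta$ of $L/\ZZ h^2$, and more generally for $\zeta$ modified by any $\QQ$-multiple of $h^2$. I would choose the multiple that projects $\zeta$ onto $\langle h^2\rangle^\perp \otimes \QQ$, namely
\[
v = \zeta + \frac{\langle \zeta, h^2\rangle}{3}h^2,
\]
using $\langle h^2, h^2\rangle = -3$ in $-\Lambda$. By construction $\mathcal{D}_L = v^\perp$, and $v \in \Lambda'^\vee_0$: for $w \in \Lambda'_0$ one has $\langle v, w\rangle = \langle \zeta, w\rangle \in \ZZ$ since $\langle h^2, w\rangle = 0$.

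Two arithmetic computations then complete the proof. A direct expansion gives $\langle v, v\rangle = \langle \zeta, \zeta\rangle + \langle \zeta, h^2\rangle^2/3$, and substituting the definition of Hassett's discriminant, which reads $d = -3\langle \zeta, \zeta\rangle - \langle \zeta, h^2\rangle^2$, yields $\frac{1}{2}\langle v, v\rangle = -d/6 = n$. For the residue class, writing $v \equiv k\gamma_1 \pmod{\Lambda'_0}$ in $\Lambda'^\vee_0 / \Lambda'_0 \cong \ZZ/3\ZZ$, the congruence $-k^2/3 \equiv -d/6 \pmod{\ZZ}$ combined with Hassett's restriction $d \equiv 0, 2 \pmod 6$ forces $k \equiv \pm d/2 \pmod 3$, matching the claimed residue class up to the sign absorbed by $y_{n,\gamma} = y_{n,-\gamma}$. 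The inverse assignment recovers $L$ as the saturation of $\ZZ h^2 + \ZZ v$ inside $-\Lambda$, giving a $\Gamma$-equivariant bijection between the two parametrizations. The main delicacy is the bookkeeping of the gluing of $\langle h^2\rangle$ with $\Lambda'_0$ inside $-\Lambda$ through the denominator $3$: this is precisely what makes $v$ leave $\Lambda'_0$ and land in its dual, and it is what links the parity conditions on $d$ to the available residue classes $\gamma_i$.
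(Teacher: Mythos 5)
Your proposal is correct and follows essentially the same route as the paper: both send $\zeta$ to its orthogonal projection $v=\zeta+\tfrac{\langle\zeta,h^2\rangle}{3}h^2\in\Lambda_0'^\vee$ and then verify $\tfrac12\langle v,v\rangle=-d/6$ together with the residue class $\pm\tfrac d2\gamma_1$, with the sign ambiguity absorbed by $y_{n,\gamma}=y_{n,-\gamma}$. Your version merely spells out a few steps the paper leaves implicit (the discriminant identity $d=-3\langle\zeta,\zeta\rangle-\langle\zeta,h^2\rangle^2$ and the deduction of the class in $\Lambda_0'^\vee/\Lambda_0'$ from its norm), which is fine.
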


As an application, we show the following result:

\begin{proposition} The Picard group $ \Pic_\QQ(\Gamma\backslash\cD)$ has rank two and  is spanned by Heegner divisors $y_{0,0}$ and $y_{1/3,\gamma_1}=D_2$.
\end{proposition}

\begin{proof} Let $\Pic_\QQ(\Gamma\backslash\cD)^{Heegner}$ be the subgroup of $\hbox{Pic}_\QQ(\Gamma\backslash\cD)$ generated by the Heegner divisors. By applying the general formula from Bruinier \cite{Br02} $\S$5.2 to the lattice $\Lambda_0$, we get
$$\dim \rm{Pic}_\QQ(\Gamma\backslash\cD)^{Heegner}=2.$$

On the other hand,  the moduli space $\cC$ of cubic fourfolds with at worst isolated ADE singularities
 is an open subset of $\Gamma\backslash\cD$ via the extended period map, and the complement  of $\cC$ is the irreducible Heegner divisor $D_2$.
If the Picard number of $\cC$ is at most one, then $\dim\Pic_\QQ(\Gamma\backslash\cD) $ is at most two and $\Pic_\QQ(\Gamma\backslash\cD)$ has to be the same as $\Pic_\QQ(\Gamma\backslash\cD)^{Heegner}$ by dimension considerations.

We prove that the dimension of  $\Pic_\QQ(\cC)$ is at most one.
 Observe that $\cC$ is constructed via the GIT quotient $\cU/\!\!/SL_6(\CC)$,
where $\cU\subset \PP^{55}$ is the open subset of the Hilbert scheme parameterizing all cubic hypersurfaces with at worst ADE singularities.
Then $\Pic(\cU)\cong \Pic(\PP^{55})$ has rank one  since the boundary of $\cU$ in $\PP^{55}$ has codimension at least two.

Let $\Pic(\cU)_{SL_6(\CC)}$ be the set of $SL_6(\CC)$-linearized line bundles on $\cU$. There is an injection $$\Pic(\cU/\!\!/SL_6(\CC))\hookrightarrow \Pic(\cU)_{SL_6(\CC)}$$ by \cite[Proposition 4.2.]{KKV89}.
Our assertion follows from the fact  the forgetful map $\Pic(\cU)_{SL_6(\CC)}\rightarrow \Pic(\cU) $ is an injection.

\end{proof}
\begin{remark}
The moduli space of quasi-polarized K3 surface $\cK_g$ is a 19-dimensional locally Hermitian symmetric variety associated to $SO(19,2)$. It is conjectured by Maulik and Pandharipande
 that the Picard group of $\cK_g$ is rationally spanned by Noether-Lefschetz divisors.  This conjecture has been verified for low degree $K3$ surfaces (cf.~ \cite{Sh80}, \cite{Sh81}, \cite{LT13}). We also refer the readers to  \cite{HH07} and \cite{BMM11} for some recent results in this subject.
\end{remark}

\subsection{The degree of special cubic divisors}\label{sbsec2.3}
Analogous to Noether-Lefschetz numbers of K3 surfaces \cite{MP07},
the degree of $C_d$ can be computed via intersection with a test
curve. Let $\pi:\fX\rightarrow \PP^1$ be a Lefschetz pencil of cubic
hypersurfaces in $\PP^5$. It yields a natural morphism
$$\iota_\pi:\PP^1\rightarrow \cM~,$$ which factors through the rational map
$\varphi:\PP(W)\dashrightarrow\cM$. It is not difficult to see that
$\deg(C_d)$ is the same as the intersection number $
\int_{\PP^1}\iota_\pi^\ast [\cC_d]$.

Let $\kappa_\pi:\PP^1\rightarrow \Gamma\backslash\cD$ be the
composition of $\iota_\pi$ and $\cP$. If we set
\begin{equation}\label{eq221}N_d=\int_{\PP^1} \kappa_\pi^\ast [D_d],\end{equation}
then we have  $N_d=\deg(C_d)$ for $d>2$, and $N_2=0$ since there are no {\it determinantal cubic fourfolds} in
a Lefschetz pencil $\pi:\fX\rightarrow \PP^1$. The generating series $\Theta(q)$ can be rewritten as
\begin{equation}\label{eq2.5}\Theta(q)=-2+\sum\limits_{d>0}N_dq^{\frac{d}{6}}.
\end{equation}

\subsection*{Some examples} One can see that there is a natural enumerate geometry interpretation of  $\deg(C_d)$,  and some of them can be computed using geometric methods when $d$ is small.
\begin{enumerate1}
 \item The degree of $C_6$ counts the number of  singular fibers in $\fX$.  The {\it first jet bundle} $J^1(\cO_{\PP^5}(3))$ \cite{Sa89}  of $\cO_{\PP^5}(3)$ parametrizes
 all nodal cubic hypersurfaces in $\PP^5$. Then $\deg(C_6)$ equals to the top Chern class of $J^1(\cO_{\PP^5}(3))$, which is $192$.
 \item The degree of $C_8$ counts the number of planes contained in the fibers of $\fX$. Let $Gr(3,6)$ be the Grassmannian parametrizing all planes in $\PP^5$. The planes contained in a cubic
 hypersurface of $\PP^5$ are parametrized by certain vector bundle $\cE$ on $Gr(3,6)$. Via  standard Schubert calculus,  one can show $\deg(\cC_8)$ equals $3402$, which is the top chern class of  $\cE$.
 \item The degree of $C_{14}$ counts the number of  Pfaffian cubic fourfolds (cf.~\cite{Ha00}) in $\fX$, which equals to $915678$ according to our main theorem.
\end{enumerate1}
\end{section}
\begin{section}{Modular forms associated to signature $(20,2)$ lattices}\label{sec:3}
In this section, we introduce the vector-valued modular form associated to an even lattice and prove the modularity of the  generating
series of special cubic divisors from Borcherds' work \cite{Bo99}.

\subsection{Vector valued modular forms}\label{sec:3.1}

The {\it metaplectic} double cover  $\Mp(\ZZ)$ of $SL_2(\ZZ)$ consists of
pairs $\left(A, \phi(\tau)\right)$,  where $$ A=
\left(\begin{array}{cc}a & b \\c & d\end{array}\right)\in SL_2(\ZZ),
~~\phi(\tau)=\pm\sqrt{c\tau+d}. $$ It is well-known that $\Mp(\ZZ)$
is generated by
$$T=\left(\MT, 1\right) ,~~~S=\left(\MS, \sqrt{\tau}\right) .$$

Let $\HH$ be the  complex upper half-plane. Suppose $\rho$ is a
representation of $\Mp(\ZZ)$ on a finite dimensional complex vector
space $V$, such that $\rho$ factors through a finite quotient. For
any $k\in \frac{1}{2} \ZZ$,  a vector-valued modular form $f(\tau)$
of weight $k$ and type $\rho$ on $V$ is a  holomorphic function on
$\HH$, such that
 $$f(A\tau)=\phi(\tau)^{2k}\cdot \rho(g)
(f(\tau)),~\textrm{for all} ~g=(A,\phi(\tau))\in \Mp (\ZZ)~.$$

\noindent When $\dim V=1$, this recovers the definition of
scalar-valued modular forms with a character.

Given a lattice M of signature $(b^+,b^-)$ with a bilinear form $\left<,\right>$, there is a Weil
representation $\rho_M$ of $\Mp(\ZZ)$ on the group ring $\CC[M^\vee/M]$ defined by the action of the generators as follows:
\begin{align*}
\rho_M(T)v_{\gamma}&=e^{2\pi i\frac{\left<\gamma,\gamma\right>}{2}}v_\gamma
\\
\rho_M(S)v_{\gamma}&=\frac{\sqrt{i}^{b^--b^+}}{\sqrt{|M^\vee/M|}}\sum\limits_{\delta\in M^\vee/M} e^{-2\pi i\left<\gamma,\delta\right>}v_{\delta},\end{align*}
where $v_\gamma$ is the standard basis of $\CC[M^\vee/M]$ for $\gamma\in M^\vee/M$. We denote by $\Mod(\Mp(\ZZ),k,\rho_M)$ the space of modular forms of weight $k$ and type $\rho_M$.

Now we take $M=\Lambda_0$ and denote by $v_i$ the standard basis of
 $\CC[\Lambda_0^\vee/\Lambda_0]$ corresponding to element $\gamma_i\in \Lambda_0^\vee/\Lambda_0$ as in $\S$ 2.2. Our first result is:
\begin{theorem}
Let $\overrightarrow{\Theta}(q)$ be the vector-valued generating
series of $N_d$ (\ref{eq221}) defined by
\begin{equation}\label{eq3.1} \overrightarrow{\Theta}(q):=\deg(R^3\pi_\ast(\Omega^1_{\fX/\PP^1}))v_0+\sum\limits_{i=0}^2\sum\limits_{d\equiv i^2\atop~mod~ 3}^{\infty} N_d q^{\frac{d}{6}}v_i. \end{equation}
 Then $\overrightarrow{\Theta}(q)$ is an element of $\Mod(\Mp(\ZZ), 11,\rho_{\Lambda_0})$.
\end{theorem}

\begin{proof} In general, as shown in \cite[Theorem 4.5]{Bo99} and \cite[Theorem 5.6]{Mc03}, the generating series for
Heegner divisors associated to a lattice $M$ of signature $(m,2)$
\begin{equation}\label{eq3.2}\overrightarrow{ \Phi}_M(q):=\sum\limits_{n\in \QQ \geq 0}\sum\limits_{\gamma\in M^\vee/M } y_{n,\gamma}q^n v_\gamma\end{equation}
is an element in $ \Pic(\Gamma_M\backslash\cD_M)\otimes_\ZZ \Mod(\Mp(\ZZ), 1+\frac{m}{2}, \rho_M)$.

In our situation $M=\Lambda_0$, we have $y_{n,\gamma}=D_{d}$ by
Lemma \ref{lem1} and thus the generating series
\begin{equation}\label{eq3.3}\overrightarrow{ \Phi}_{\Lambda_0}(q)= y_{0,0}v_0+\sum\limits_{i=0}^2\sum\limits_{d\equiv i^2\atop~mod~ 3}^{\infty} D_d q^{\frac{d}{6}}v_i
\end{equation}
is a vector-valued modular form of weight $11$ and type $\rho_{\Lambda_0}$ with coefficients in $\Pic(\Gamma\backslash\cD)$.

Next, let $\lambda\in \Pic(\Gamma\backslash\cD)^\ast$ be a linear function defined by
$$\lambda(E)=\int_{\PP^1} \kappa_\pi^\ast[E], ~\forall E\in\Pic(\Gamma\backslash\cD).$$
Then as shown in $\S$2.2, we have $\lambda(D_d)=N_d$ and $$\lambda(y_{0,0})=\int \kappa_\pi^\ast [y_{0,0}]=\int_{\PP^1} R^3\pi_\ast(\Omega^1_{\fX/\PP^1}),$$
which is the degree of
the Hodge bundle $R^3\pi_\ast(\Omega^1_{\fX/\PP^1})$.
It follows that $\overrightarrow{\Theta}(q)=\lambda\otimes\overrightarrow{\Phi}_{\Lambda_0}(q)$ is an element in $\Mod(\Mp(\ZZ), 11,\rho_{\Lambda_0})$.
\end{proof}
\begin{remark}
In Borcherds' setting, the lattice $M$ has signature $(2,m)$ and the generating series of Heegner divisors are vector-valued modular forms of type $\rho_M^\ast$ (dual of $\rho_M$).
 For $M$ with signature $(m,2)$, one can get \eqref{eq3.2} by transferring the lattice to $-M$, which has signature $(2,m)$ and $\rho_{-M}^\ast\cong\rho_M$.
\end{remark}

\subsection{Construction of modular forms}Here we introduce some modular forms which will be used later.
\subsubsection{Scalar-valued Eisenstein series}
 The classical Eisenstein series
 \begin{equation}\label{eq5.1}E_{k}(q)=1-\frac{2k}{B_k} \sum\limits_{n\geq 1}\sum\limits_{d|n} d^{k-1}q^n\end{equation}
 is a modular form of weight $k$ for $SL_2(\ZZ)$ for $k=2l>2$  , where $B_k$ is  the Bernoulli number.

Let us denote by $\Gamma_0(3)$ (resp. $\Gamma^0(3)$) the arithmetic
subgroups in $SL_2(\ZZ)$ defined by
$$\left\{
\left(\begin{array}{cc}a & b
\\c & d\end{array}\right)\in SL_2(\ZZ)~|~c\equiv 0\mod 3~\right\}
\hbox{(resp. $b\equiv 0\mod3$)},$$
and let $\chi:SL_2(\ZZ)\rightarrow \{0,\pm1\}$ be the nontrivial Dirichlet character  modulo 3 on $SL_2(\ZZ)$, i.e.
 \begin{equation}\label{eq5.2}\chi\left(\begin{array}{cc}a & b \\c & d\end{array}\right)=\chi_{-3}(d),~~~\forall  \left(\begin{array}{cc}a & b \\c & d\end{array}\right)\in SL_2(\ZZ)
 \end{equation} where $\chi_{-3}:\ZZ\rightarrow \{0,\pm1\}$ is the nontrivial Dirichlet character modulo 3.

\begin{proposition}\label{prop1}Assume that $k>0$ is an odd integer. Then the Eisenstein series
\begin{equation}\label{eq422} E_k(q,\chi):=\begin{cases} 1+6\sum\limits_{n\geq 1}\sum\limits_{d|n} \chi_{-3} (\frac{n}{d})q^n & k= 1,\\
    \sum\limits_{n\geq 1}\sum\limits_{d|n}d^{k-1}\chi_{-3}(\frac{n}{d}) q^n  & k\geq3.
\end{cases}
\end{equation}
is a modular form of weight $k$ with character $\chi$ for congruence group $\Gamma_0(3)$. Moreover,  $E_1(q,\chi)=\alpha(q)$ and $E_3(q,\chi)=\beta(q)$.

Respectively, $E_k'(q,\chi)=E_k(q^{\frac{1}{3}},\chi)$ is a modular form of weight $k$ with character $\chi$ for congruence group $\Gamma^0(3)$.
\end{proposition}
\begin{proof}
See \cite[Lemma 10.2 \&10.3]{Bo00} for the modularity of \eqref{eq422}.
Since the Legendre symbol of $3$ and the Dirichlet character $\chi_{-3}$ coincide, then $E_1(q,\chi)=\alpha(q)$ and $E_3(q,\chi)=\beta(q)$, where $\alpha$ and $\beta$ are as in Theorem 1.

The modularity of $E_k'(\tau,\chi)$ comes from
\cite[Theorem 4.2.3]{DS05}  and \cite{DS05} $\S$4.8.
\end{proof}

\subsubsection{Vector-valued Eisenstein series}\label{sbsec6.2}
Let $k\in\frac{1}{2}\ZZ $ and $M$ an even lattice. The vector-valued Eisenstein series $\overrightarrow{E}_k(q)$ on $\CC[M^\vee/M]$ constructed
 via Petersson slash operator \cite{Br02} are vector-valued modular forms of weight $k$ and type $\rho_M$. The equivalence of Weil representations $\rho_{\Lambda_0}$ and  $\rho_{A_2}$ implies
\begin{equation}\label{eq:3.3}
\Mod(\Mp(\ZZ), k,\rho_{\Lambda_0})=\Mod(\Mp(\ZZ), k,\rho_{A_2}).
\end{equation}
Let $\overrightarrow{E}_k(q)$ be the vector-valued Eisenstein series associated to $A_2$ of weight $k$. For $k>2$, it is given by \cite{BK01} that
 \begin{align*}
\overrightarrow{E_{k}}(q)=2v_0+\sum\limits_{\gamma\in  W^\vee/W}~\sum\limits_{n\in \ZZ-\frac{1}{2}\gamma^2\atop n> 0} \frac{2^{k+1}\pi^kn^{k-1}(-1)^{(k-1)/2}}{\sqrt{3} \Gamma(k) L(k,\chi_{-3})}
  \prod\limits_{p|18n} \frac{L_{\gamma,n}(k,p)}{1-\chi_{-3}(p)p^{-k}}q^n v_\gamma.
\end{align*}
Here,  $L(k,\chi_{-3})$ denotes the Dirichlet L-series with character $\chi_{-3}$ and $L_{\gamma, n}(k,p)$ is the local Euler product defined as following:
\begin{align*}
d_\gamma&=\hbox{min}\{ b\in \NN,~ b\gamma\in W'\}; \\  \omega_p&=1+2v_p(2d_\gamma n),~ v_p \hbox{~is the $p$-evaluation};
\\
 N_{\gamma,n}(a)&= \sharp \{ r\in (\ZZ/a\ZZ)^2~|\frac{1}{2}(r-\gamma)^2+n\equiv 0 \mod a \};\\
 L_{\gamma, n}(k,p)&=(1-p^{1-k}) \sum\limits_{v=0}^{\omega_p-1} N_{\gamma,n}(p^v)p^{-kv}+N_{\gamma,n}(p^{\omega_p})p^{-k\omega_p}.\\
\end{align*}
When $k=5$,  $$L(5,\chi_{-3})=\frac{2^4\pi^5}{5! \sqrt{3}
}\sum\limits_{n=1}^{3}\chi_{-3}(n)
B_5(1-n/3)=\frac{2^5\pi^5}{3^5\Gamma(5)\sqrt{3}},$$

\noindent where $B_k(x)=\sum\limits_{k=0}^n {{n}\choose{r}} B_{n-k}x^k$ is the Bernoulli polynomial.
  Thus one obtains that
\begin{align*}
\overrightarrow{E_5}(q)=&2v_0+ \sum\limits_{i=0}^{2} \sum\limits_{n\in \ZZ+\frac{1}{3}i^{2}\atop n > 0} 486n^4 \prod\limits_{p|18n}
\frac{L_{\gamma_i,n}(5,p)}{1-\chi_{-3}(p)p^{-5}}q^n v_i\\ =&(2+492q+7200q^2+39372q^3+\ldots) v_0+(6q^{1/3}+1446q^{4/3}+\\ &14412q^{7/3}+\ldots)v_1+(6q^{1/3}+1446q^{4/3}+14412q^{7/3}+\ldots)v_2 .
\end{align*}

\subsubsection{Rankin-Cohen bracket}
Given any two level $N$ scalar-valued modular forms $f(q),g(q)$ on the upper half plane $\cH$ of weight $k_1$ and $k_2$. The n-th Rankin-Cohen bracket is defined as follows:
$$[f(q),g(q)]_n=\sum\limits_{r=0}^n (-1)^r \left(\begin{array}{c}n+k_1-1 \\n-r\end{array}\right)\left(\begin{array}{c} n+k_2-1\\r\end{array}\right) f^{(r)}(q)\cdot g^{(n-r)}(q)$$
where $ f^{(r)}$ denotes the r-th differential of $f$ with respect to $\tau$.

For a vector-valued modular form $$\overrightarrow{F}(q)=\sum\limits_{\gamma\in M^\vee/M} F_\gamma v_\gamma\in \Mod(\Mp(\ZZ), k_1,\rho_M),$$
 one can extend the Rankin-Cohen bracket to $\overrightarrow{F}(q)$ and $g(q)$ as follows,
\begin{equation}
[\overrightarrow{F}(q),g(q)]_n=\sum\limits_{\gamma\in M^\vee/M}[F_\gamma(q),g(q)]_n v_\gamma.
 \end{equation}

According to  \cite[Lemma 5]{MP07},  we have the following result:
\begin{lemma}\label{lem2} The vector-valued functions
\begin{equation}\label{eq5.7}
\overrightarrow{F_n}(q):=[\overrightarrow{E_5}(q), E_{6-2n}(q)]_n,n=0,1.
\end{equation}
are vector-valued modular forms of weight $11$ and type $\rho_{\Lambda_0}$.
\end{lemma}

\subsection{Expression of the generating series}
Now we are ready to give an explicit expression of  $\overrightarrow{\Theta}(q)$.  From the dimension formula of Bruinier in \cite{Br02a},  we know that
\begin{equation}\label{eq3.3}
\dim \Mod(\Mp(\ZZ),11,\rho_{\Lambda_0})=2.
\end{equation}

It follows that
\begin{theorem} \label{thm1}
Let $\overrightarrow{F_0}(q),\overrightarrow{F_1}(q)$ be the
vector-valued modular forms constructed in Lemma \ref{lem2}.
Then $\{\overrightarrow{F_0}(q),\overrightarrow{F_1}(q) \}$
 is a basis of $\Mod(\Mp(\ZZ), 11,\rho_{\Lambda_0})$ and
 \begin{equation}\label{eq3.4}
 \begin{aligned}
 \overrightarrow{\Theta}(q)&=-\overrightarrow{F_0}(q)-\frac{3}{4}\overrightarrow{F_1}(q)\\ &= (-2+192 q+196272q^2+\ldots)v_0+(0+3402 q^{4/3}+\\~&917568q^{7/3}+\ldots)v_1+(3402 q^{4/3}+917568q^{7/3}+\ldots)v_2.
 \end{aligned}
 \end{equation}
 \end{theorem}
\begin{proof} By checking the coefficients of the term $q^0v_0$,  we know that $\overrightarrow{F_0}$  and $\overrightarrow{F_1}$ are linearly independent.
Thus they form a basis of $\Mod(\Mp(\ZZ), 11,\rho_{A_2})$ by dimension considerations.
To obtain the expression (\ref{eq3.4}),  it suffices to use the
following two constraints:
\begin{enumerate1}
 \item The degree of the Hodge bundle
$R^3\pi_\ast(\Omega^1_{\pi}) $ is $-2$, which gives the coefficient
of $q^0v_0$. By Grothendieck-Riemann-Roch, we have the following
Chern character computation:
\begin{align*}\hbox{ch}(\pi_!
\Omega^1_{\fX/\PP^1})&=\hbox{ch}(-R^1\pi_\ast
\Omega_{\fX/\PP^1}^1-R^3\pi_\ast\Omega^1_{\fX/\PP^1}) \\&=
\pi_\ast(\hbox{ch}(\Omega^1_{\fX/\PP^1})\hbox{td}(\hbox{T}_{\fX/\PP^1}))\\
&=-2+2c_1(\cO_{\PP^1} (1)),
\end{align*} where
$\hbox{T}_{\fX/\PP^1}$ is the relative tangent bundle. Since $R^1\pi_\ast(\Omega^1_{\fX/\PP^1})=R^2\pi_\ast
\CC$ is trivial by the Lefschetz hyperplane theorem, we get
$\deg(\Omega^1_{\fX/\PP^1})=-2$.
\item The coefficient of $q^{1/3}v_1$ is $N_2=0$ as shown in $\S$2.3.
\end{enumerate1}
\end{proof}
\end{section}

\section{Proof of Theorem 1}
To prove our main theorem, we first start with
a Lemma on the modularity on the components of a vector-valued modular form:
\begin{lemma} \label{lem3}Let $\overrightarrow{F}=\sum\limits_{i=0}^2 F_iv_i$ be an element in $\Mod(\Mp(\ZZ),k,\rho_{\Lambda_0})$.
 Then the following are true:
\begin{enumeratea}
 \item $F_0$ is a scalar-valued modular form for
$\Gamma_0(3)$ of weight $k$ with character $\chi$.
 \item $F_1=F_2$ is a scalar-valued modular form for
$\Gamma_1(3)$ of weight $k$ with character  $\chi'(A)=e^{\frac{2b\pi
i}{3}}$, where $$\Gamma_1(3)=\left\{ \left(\begin{array}{cc}a & b
\\c & d\end{array}\right) \in SL_2(\ZZ) |~a\equiv d\equiv 1,
~c\equiv 0\mod 3\right\}.$$
 \item The sum $\sum\limits_{i=0}^2 F_i$ is a
scalar-valued modular form for $\Gamma^1(3)$ of weight $k$ with
character $\chi$.
\end{enumeratea}
\end{lemma}
\begin{proof}  Statement (i) and (ii)  follows from \cite{Sc12} $\S$2. For (iii), since the generators of the congruence subgroup $\Gamma^0(3)$ are
 \begin{equation}\label{eq4.1} \left(\begin{array}{cc}1 & 0 \\-1 & 1\end{array}\right) \textrm{ and }~~~~~~~~~~~~~~~~~~~~~\left(\begin{array}{cc}2 & 3 \\-1 & -1\end{array}\right),\end{equation}
 then statement (iii) follow from a direct computation by checking the modularity on these generators.
\end{proof}
\noindent{\bf Proof of Theorem 1}.
Let us write $\overrightarrow{\Theta}(q)=\sum\limits_{i=0}^2\Theta_iv_i\in \Mod(\Mp(\ZZ),11,\rho_{\Lambda_0})$. Then it is not difficult to see that \begin{equation}\label{eq4.2}
\Theta(q)=\Theta_0(q)+\Theta_1(q)=\frac{1}{2}(\Theta_0(q)+\sum_{i=0}^2\Theta_i(q)),
\end{equation}
from the expressions \eqref{eq2.5} and \eqref{eq3.1}.

Next, let  $\Mod(\Gamma_0(3), \chi)$ (resp.~$\Mod(\Gamma_0(3),
\chi)$) denote the space of scalar-valued modular forms  with
character $\chi$ for $\Gamma_0(3)$ (resp.~$\Gamma^0(3)$).
 It is known by \cite{Bo00} $\S$12 that  $\Mod(\Gamma_0(3), \chi)$ is a polynomial ring generated by two modular forms of weight $1$ and  weight $3$.

 By Proposition \ref{prop1},  we thus get that $\Mod(\Gamma_0(3), \chi)$ is generated by $\alpha(q)$ of weight $1$ and $\beta(q)$ of weight $3$.
 Since $\Theta_0\in \Mod(\Gamma_0(3), \chi)$ has weight $11$ by Lemma \ref{lem3},  $\Theta_0$ can be expressed as a linear combination
of $$\alpha^{11}(q),\beta^{8}(q)\beta(q), \alpha^5(q)\beta^2(q), \alpha^2(q)\beta^3(q). $$
 The coefficients computation shows that
 \begin{equation}\label{eq4.3}
\Theta_0(q) = -2\alpha^{11}+324 \alpha^8\beta +183708\alpha^5\beta^2  +4408992 \alpha^2\beta^3.
\end{equation}
 Similarly,  $\Mod(\Gamma^0(3),\chi)$ is a polynomial ring generated by $\alpha(q^{\frac{1}{3}})$ and $\beta(q^{\frac{1}{3}})$.
 Then the modular form $\sum\limits_{i=0}\Theta_i\in \Mod(\Gamma^0(3),\chi)$ has weight $11$ and can be expressed as
 \begin{equation}\label{eq4.4}
\begin{aligned}
\sum_{i=0}^2\Theta_i=&-2\alpha^{11}(q^{\frac{1}{3}})+132\alpha^8(q^{\frac{1}{3}})\beta(q^{\frac{1}{3}})- 2772\alpha^5(q^{\frac{1}{3}})\beta^2(q^{\frac{1}{3}})\\ &+18144\alpha^2(q^{\frac{1}{3}})\beta^3(q^{\frac{1}{3}}).
 \end{aligned}
 \end{equation}
 Our main theorem follows from \eqref{eq4.2}, \eqref{eq4.3} and \eqref{eq4.4}. \qed


\bibliographystyle {plain}

\bibliography{MFSCF}

\end{document}